\numberwithin{equation}{section}
\newtheorem{theorem}{Theorem}[section]
\newtheorem{lemma}[theorem]{Lemma}
\newtheorem{proposition}[theorem]{Proposition}
\theoremstyle{definition}
\newtheorem{example}[theorem]{Example}
\theoremstyle{remark}
\newtheorem{remark}[theorem]{Remark}
\def\to{{\rightarrow}}
\def\R{\mathbb{R}}
\def\Z{\mathbb{Z}}
\def\N{\mathbb{N}}
\def\Q{\mathbf{Q}}
\def\S{\mathbf{S}}
\def\F{\mathbf{F}}
\def\Id{\mathbf{I}}
\def\P{\mathbf{P}}
\def\U{\mathbf{U}}
\def\A{\mathbf{A}}
\def\X{\mathbf{X}}
\def\M{\mathbf{M}}
\def\a{\mathbf{a}}
\def\b{\mathbf{b}}
\def\x{\mathbf{x}}
\def\y{\mathbf{y}}
\def\z{\mathbf{z}}
\def\c{\mathbf{c}}
\def\e{\mathbf{e}}
\def\f{\mathbf{f}}
\title{A MAX-CUT formulation of 0/1 programs}
\author{Jean B. Lasserre}
\address{LAAS-CNRS and Institute of Mathematics,
LAAS, 7 Avenue du Colonel Roche, BP 54200,
31031 Toulouse C\'{e}dex 4, France.}
\email{lasserre@laas.fr}
\date{\today}
\begin{document}

\begin{abstract}
We show that the linear or quadratic 0/1 program
\[\P:\quad\min\{ \c^T\x+\x^T\F\x : \:\A\,\x =\b;\:\x\in\{0,1\}^n\},\]
can be formulated as a MAX-CUT problem
whose associated graph
is simply related to the matrices $\F$ and $\A^T\A$.
Hence the whole arsenal 
of approximation techniques for MAX-CUT can be applied.
We also  compare the lower bound
of the resulting semidefinite (or Shor) relaxation with that of the standard LP-relaxation and the first semidefinite relaxations
associated with the Lasserre hierarchy and the copositive formulations of $\P$.\\
{\bf Keywords:} linear and quadratic 0/1 programs; MAX-CUT problem; LP- and semidefinite relaxations.
\end{abstract}
\date{}
\maketitle

\section{Introduction}

Consider the linear or quadratic 0/1 program $\P$ defined by:
\begin{equation}
\label{def-pb}
\P:\qquad f^*\,=\,\min_\x\,\{\,\c^T\x+\x^T\F\x:\:\A\,\x\,=\,\b;\quad\x\in\{0,1\}^n\,\}
\end{equation}
for some cost vector $\c\in\R^n$, some matrix $\A\in\Z^{m\times n}$, some vector $\b\in\Z^m$, and some
real symmetric matrix $\F\in\R^{n\times n}$. If $\F=0$ then $\P$ is a 0/1 linear program
and a quadratic 0/1 program otherwise. 
Obtaining good quality lower bounds on $f^*$ is highly desirable since 
the efficiency of Branch \& Bound algorithms to solve large scale problems $\P$ heavily depends on the quality 
of bounds of this form computed at nodes of the search tree. 

To obtain lower bounds for 0/1 programs (\ref{def-pb}) one may solve a relaxation of $\P$
where the integrality constraints $\x\in\{0,1\}^n$ are replaced with the box constraints
$\x\in [0,1]^n$. If $\F=0$ the resulting relaxation is linear whereas if 
$\F$ is positive definite it is a (convex) quadratic program. If
$\F$ is not positive semidefinite then one may also solve a convex quadratic program but now 
 with an appropriate convex quadratic underestimator $\x^T\tilde{\F}\x$ of $\x^T\F\x$ on $[0,1]^n$.
 An alternative is to consider an equivalent formulation of $\P$ as a copositive conic program as advocated by Burer \cite{burer}
 and compute a sequence of lower bounds by solving an appropriate hierarchy of LP- or SDP-relaxations associated 
 with the copositive cone (or its dual). For more details on the latter approach the interested reader is referred to e.g. 
 De Klerk and Pasechnik \cite{de-pasech}, D\"ur \cite{durr}, Bomze \cite{bomze1}, and Bomze and de Klerk \cite{bomze}.
 
{\bf Contribution.} The purpose of this note is to show that solving $\P$ is equivalent to minimizing a quadratic form 
in $n+1$ variables on the hypercube $\{-1,1\}^{n+1}$ (and the quadratic form is explicit from the data of $\P$). In other words $\P$ can be viewed as an explicit instance of the MAX-CUT problem. This idea had been already briefly mentioned in Rendl \cite{rendl2}
in the context of partitioning and ordering problems.
Hence the MAX-CUT problem which at first glance seems to be 
a very specific combinatorial optimization problem, in fact can be considered as a canonical model of linear and quadratic 0/1 programs. 
In particular, to each linear or quadratic 0/1 program (\ref{def-pb}) one may associate a graph $G=(V,E)$ with $n+1$ nodes and $(i,j)\in E$
whenever a product $x_ix_j$ has a nonzero coefficient in some quadratic form built upon the data
$\c,\b,\F$ and $\A$ of (\ref{def-pb}). (Among other things, the sparsity of $G$ is related to the sparsity of the matrices
$\F$ and $\A^T\A$.) Then solving (\ref{def-pb}) reduces to finding a maximum (weighted) cut of $G$. 

Therefore the whole specialized arsenal of approximation techniques for MAX-CUT 
can be applied. In particular one obtains a lower bound $f^*_1$ on $f^*$ by solving the standard (Shor) SDP-relaxation
associated with the resulting MAX-CUT problem while solving higher levels of the associated Lasserre-SOS hierarchy 
\cite{lasserre1,lasserre2} would provide
a monotone nondecreasing sequence of improved lower bounds $f^*_1\leq f^*_d\leq f^*$, $d=2,\ldots$, but of course at a higher computational cost. 
Alternatively one may also apply the Handelman hierarchy of LP-relaxations as described and analyzed in Laurent and Sun \cite{laurent}.
For more details on recent developments on computational approaches to MAX-CUT the interested reader is referred to 
Palagi et al. \cite{rendl}. In addition one may also obtain
performance guarantees {\it \`a la Nesterov} \cite{nesterov} 
or their improvements by Marshall \cite{marshall}. 
Finally, the same methodology also works for general 0/1 optimization problems with feasible set as in (\ref{def-pb}) and polynomial criterion $f\in\R[\x]$ of degree $d>2$, except that now the problem reduces to minimizing a new polynomial criterion
$\tilde{f}(\x)$ on the hypercube $\{-1,1\}^n$ (and not a MAX-CUT problem any more as the degree of $\tilde{f}$ is larger than $2$).

{\bf Numerical experiments.} For 0/1 linear programs ($\F=0$) the lower bound $f^*_1$ can be better than the 
standard LP-relaxation which consists in replacing the integrality constraints $\x\in\{0,1\}^n$ with the box $[0,1]^n$, as shown
on a (limited) sample of 0/1-knapsack-type examples. 
In fact $f^*_1$ is almost always better than the lower bound obtained from
the first SDP-relaxation of the Lasserre-SOS hierarchy applied to the initial formulation (\ref{def-pb}) of the problem (an SDP
of same size which is a basic quadratic relaxation of the initial problem). This is good news since typically the SOS-hierarchy is known to produce good lower bounds
for general polynomial optimization problems (discrete or not) even at the first level of the hierarchy. 
Even more, 
the first level SDP-relaxation has the celebrated Goemans \& Williamson performance guarantee ($\approx 87\%$)
when the matrix $\Q$ (associated with the quadratic form) has nonnegative entries and 
a performance guarantee $\approx 64\%$ when $\Q\succeq0$. (However note that the matrix $\Q$
associated with our MAX-CUT problem equivalent to the initial 0/1 program (\ref{def-pb}) does not have 
all its entries nonnegative.) 
For quadratic 0/1 knapsacks, $f^*_1$ is also better than the lower bound obtained
by relaxing $\{0,1\}^n$ to $[0,1]^n$, replacing $\F$ with a convex quadratic underestimator, and solving the resulting convex quadratic relaxation.

We have also considered the MAX-CUT formulation of the $k$-cluster problem
$\rho=\min\{\x^T\A\x: \e^T\x=k;\x\in\{0,1\}^n\}$ where $k\in\N$ and $\A$ is some real symmetric matrix,
for $n=70$ and $n=100$ variables and with various percentages of zero entries in the matrix $\A$. In all examples 
the optimal value of the Shor relaxation was almost indistinguishable of
the first semidefinite relaxation of the Lasserre-SOS hierarchy applied to the initial formulation.
We have also compared the Shor relaxation with a quadratic  convex relaxation of the problem.
The former always provides better lower bounds (and sometimes significantly better). At last we have also compared the Shor relaxations respectively associated with the MAX-CUT 
and copositive formulations. Again the respective optimal values are very close (less than $1\%$ relative difference
and in a few cases the former is significantly better).

\section{Main result}

Denote by $\Z$ the set of integer numbers and $\N\subset\Z$ the set of natural numbers. Let $\P$ be the 0/1 program defined in (\ref{def-pb}) with $\F^T=\F\in\R^{n\times n}$,
$\A\in\Z^{m\times n}$, $\c\in\R^n$ and $\b\in\Z^m$. Let $\vert\c\vert:=(\vert c_i\vert)\in\R^n_+$.
With $\e\in\Z^n$ being the vector of all ones, notice first that $\P$ has an equivalent formulation 
on the hypercube $\{-1,1\}^n$, by the change of variables $\tilde{\x}:=2\x-\e$.
Indeed, $\A$, $\b$, $\c$ and $\F$ 
now become $\A/2$, $\b-\A\e/2$, $(\c+\e^T\F)/2$ and  $\F/4$ respectively.

Therefore from now on we consider the discrete program:

\begin{equation}
\label{def-newpb}
\P:\qquad f^*=\min_{\x\in\{-1,1\}^n}\:\{\: \c^T\x+\x^T\F\x:\: \A\,\x\,=\,\b\},
\end{equation}
on the hypercube $\{-1,1\}^n$, with $\A\in\Z^{m\times n}$, $\c\in\R^n$, $\b\in\Z^m$, and $\F^T=\F\in\R^{n\times n}$.
With $\c$ and $\F$, let us associate the scalars:
\begin{eqnarray*}
r^1_{\c,\F}&=&\min\,\{\,\c^T\x+\langle \X,\F\rangle: \left[\begin{array}{cc}1&\x^T\\
\x&\X\end{array}\right]\,\succeq0;\:X_{ii}=1,\:i=1,\ldots,n\}\\
r^2_{\c,\F}&=&\max\,\{\,\c^T\x+\langle \X,\F\rangle: \left[\begin{array}{cc}1&\x^T\\
\x&\X\end{array}\right]\,\succeq0;\:X_{ii}=1,\:i=1,\ldots,n\}
\end{eqnarray*}
(with $\X^T=\X$) and let
\begin{equation}
\label{def-rcf}
\rho(\c,\F)\,:=\,\max_{i=1,2}\vert r^i_{\c,\F}\vert.
\end{equation}
It is straightforward to verify that 
\begin{equation}
\label{bound-rho}
\rho(\c,\F)\geq\max\,\{\,\vert\c^T\x+\x^T\F\x\,\vert:\:\x\in\{-1,1\}^n\,\},\end{equation}
and $\rho(\c,\F)=\vert\c\vert$ if $\F=0$. Moreover each scalar $r^i_{\c,\F}$ 
can be computed by solving an SDP which is the Shor relaxation (or first level of the Lasserre-SOS hierarchy \cite{lasserre1,lasserre2})
associated with the problems $\min\,(\max)\{\c^T\x+\x^T\F\x:\x\in\{-1,1\}^n\}$.

\subsection{A MAX-CUT formulation of $\P$}
\begin{lemma}
\label{lemma-1}
Let $\P$ be as (\ref{def-newpb}) and
let $\rho(\c,\F)$ be as in (\ref{def-rcf}). If $f^*<+\infty$
then $f^*$ is the optimal value of the 
quadratic minimization problem, i.e.,
\begin{equation}
\label{quad-pb}
f^*\,=\,\theta:=\min_{\x\in \{-1,1\}^n}\:\c^T\x+\x^T\F\x+(2\,\rho(\c,\F)+1)\cdot\Vert \A\,\x-\b\Vert^2.
\end{equation}
Moreover $\P$ has no feasible solution ($f^*=+\infty$)  if and only if $\theta>\rho(\c,\F)$.
\end{lemma}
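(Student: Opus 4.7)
The plan is to exploit two elementary facts: that the penalty term $\|\A\x-\b\|^2$ is integer-valued and strictly positive on infeasible points, and that the unpenalized objective is uniformly bounded by $\rho(\c,\F)$ on the hypercube. These will force the penalized minimum to be attained on the feasible set whenever that set is nonempty.

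First I would observe that since $\A\in\Z^{m\times n}$, $\b\in\Z^m$, and $\x\in\{-1,1\}^n$, the vector $\A\x-\b$ has integer coordinates. Consequently, for any $\x\in\{-1,1\}^n$ one has either $\A\x=\b$, in which case the penalty vanishes, or $\|\A\x-\b\|^2\ge 1$. Combined with the inequality (\ref{bound-rho}), which gives $\c^T\x+\x^T\F\x\ge -\rho(\c,\F)$ uniformly on $\{-1,1\}^n$, this yields for every infeasible hypercube point
\begin{equation*}
\c^T\x+\x^T\F\x+(2\rho(\c,\F)+1)\,\|\A\x-\b\|^2\;\ge\;-\rho(\c,\F)+2\rho(\c,\F)+1\;=\;\rho(\c,\F)+1.
\end{equation*}

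Now suppose $f^*<+\infty$, and let $\x^\star\in\{-1,1\}^n$ be a feasible point attaining $f^*$. Since $\A\x^\star=\b$, plugging $\x^\star$ into (\ref{quad-pb}) gives $\theta\le f^*\le\rho(\c,\F)$, the second inequality again by (\ref{bound-rho}). On the other hand, the displayed lower bound above shows that any infeasible minimizer would force $\theta\ge\rho(\c,\F)+1$, contradicting $\theta\le\rho(\c,\F)$. Hence any minimizer of (\ref{quad-pb}) must be feasible, and on feasible points the penalized and unpenalized objectives coincide; therefore $\theta=f^*$.

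Finally, for the infeasibility criterion: if $f^*=+\infty$ then every $\x\in\{-1,1\}^n$ is infeasible, so by the lower bound above $\theta\ge\rho(\c,\F)+1>\rho(\c,\F)$. Conversely, if $f^*<+\infty$ then the argument just given yields $\theta=f^*\le\rho(\c,\F)$, so $\theta>\rho(\c,\F)$ cannot hold. The only subtle point is making sure the penalty coefficient $2\rho(\c,\F)+1$ is strictly larger than $2\rho(\c,\F)$, which is what creates the gap $\rho(\c,\F)+1$ between feasible and infeasible values; everything else is a routine bookkeeping of the two bounds.
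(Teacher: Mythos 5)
Your proof is correct and follows essentially the same route as the paper's: both use the integrality of $\A$ and $\b$ to get the unit gap $\Vert\A\x-\b\Vert^2\ge 1$ off the feasible set, and the bound (\ref{bound-rho}) to trap feasible values in $[-\rho(\c,\F),\rho(\c,\F)]$ while pushing infeasible values strictly above $\rho(\c,\F)$. Nothing further is needed.
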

\begin{proof}
Let $\Delta:=\{\x\in\{-1,1\}^n: \A\x=\b\}$ be the feasible set of $\P$ defined in (\ref{def-newpb}), and
let $f:\R^n\to\R$ be the function 
\begin{equation}
\label{def-f}
x\mapsto f(\x):=\c^T\x+\x^T\F\x+(2\,\rho(\c,\F)+1)\cdot\Vert\A\x-\b\Vert^2.\end{equation}
On $\{-1,1\}^n$ one has $\max\{\c^T\x+\x^T\F\x:\x\in\{-1,1\}^n\}\leq \rho(\c,\F)$, and 
\[\Vert\A\x-\b\Vert^2\geq 1,\quad \forall \,\x\in\,\{-1,1\}^n\setminus\Delta,\]
because $\A\in \Z^{m\times n}$ and $\b\in\Z^m$.
Therefore,
\begin{equation}
\label{value}
f(\x)\:\left\{\begin{array}{l}
=\c^T\x+\x^T\F\x \mbox{ on }\Delta\\
\geq \c^T\x+\x^T\F\x+2\,\rho(\c,\F)+1> \rho(\c,\F)\mbox{ on }\{-1,1\}^n\setminus \Delta.\end{array}\right.\end{equation}
From this and $\c^T\x +\x^T\F\x\leq\rho(\c,\F)$ on $\Delta$, the result follows. 
\end{proof}
Next, let $Q:\R^{n+1}\to\,\R$ be the homogenization of the quadratic polynomial $f$, i.e.,
the quadratic form $Q(\x,x_0):=x_0^2f(\frac{\x}{x_0})$, or in explicitly form:
\begin{equation}
\label{quadraticform}
Q(\x,x_0)\,=\,x_0\,\c^T\x+\x^T\F\x+(2\,\rho(\c,\F)+1)\cdot \Vert\A\x-x_0\,\b\Vert^2.
\end{equation}
Observe that $Q(\x,1)=f(\x)$.
\begin{theorem}
\label{th-linear}
Let $f^*=\min\{\c^T\x+\x^T\F\x:\,\A\x=\b;\x\in\{-1,1\}^n\}$ and let $Q$ be the quadratic form in (\ref{quadraticform}). If $f^*<+\infty$ then
\begin{equation}
\label{obs}
f^*\,=\,\theta:=\min_{(\x,x_0)\in\{-1,1\}^{n+1}}\,Q(\x,x_0),
\end{equation}
that is, $f^*$ is the optimal value of the MAX-CUT problem associated with the quadratic form $Q$. 
Moreover $f^*=+\infty$ if and only if $\theta>\rho(\c,\F)$.
\end{theorem}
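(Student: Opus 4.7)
The plan is to reduce the MAX-CUT problem on $n+1$ binary variables back to the problem in Lemma \ref{lemma-1}, exploiting the fact that $Q$ is homogeneous of degree two. First I would note that since $Q(\x,x_0)$ is a quadratic form, $Q(-\x,-x_0)=Q(\x,x_0)$, so the minimum over $(\x,x_0)\in\{-1,1\}^{n+1}$ is attained (without loss of generality) at some point with $x_0=1$. Combined with the identity $Q(\x,1)=f(\x)$ noted just before the theorem, this gives
\[
\theta \;=\; \min_{(\x,x_0)\in\{-1,1\}^{n+1}} Q(\x,x_0) \;=\; \min_{\x\in\{-1,1\}^n} Q(\x,1) \;=\; \min_{\x\in\{-1,1\}^n} f(\x).
\]
By Lemma \ref{lemma-1}, the last quantity equals $f^*$ whenever $f^*<+\infty$, which proves the first assertion.

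For the ``moreover'' part, I would argue both implications using the key dichotomy established in (\ref{value}) during the proof of Lemma \ref{lemma-1}. If $\P$ is infeasible, then $\Delta=\emptyset$, so every $\x\in\{-1,1\}^n$ falls into the second case of (\ref{value}) and satisfies $f(\x)>\rho(\c,\F)$; taking the minimum yields $\theta>\rho(\c,\F)$. Conversely, if $f^*<+\infty$, then $\Delta\neq\emptyset$, and for any $\x\in\Delta$ we have $f(\x)=\c^T\x+\x^T\F\x\leq\rho(\c,\F)$ by (\ref{bound-rho}); hence $\theta=f^*\leq\rho(\c,\F)$. Contrapositively, $\theta>\rho(\c,\F)$ forces $f^*=+\infty$, completing the equivalence.

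No step is really an obstacle here: the theorem is essentially a corollary of Lemma \ref{lemma-1} once one observes the sign symmetry of the homogeneous quadratic $Q$. The only point requiring a small amount of care is to keep the ``$f^*<+\infty$'' hypothesis and the infeasibility criterion consistent, which is why invoking (\ref{value}) and (\ref{bound-rho}) in tandem, rather than merely re-citing Lemma \ref{lemma-1} as a black box, makes the second assertion cleanest.
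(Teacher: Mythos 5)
Your proposal is correct and follows essentially the same route as the paper: use the sign symmetry $Q(-\x,-x_0)=Q(\x,x_0)$ of the homogeneous quadratic form to restrict to $x_0=1$, identify $Q(\x,1)$ with $f(\x)$, and then invoke Lemma \ref{lemma-1}. Your slightly more explicit treatment of the ``moreover'' part via (\ref{value}) and (\ref{bound-rho}) is just an unfolding of what Lemma \ref{lemma-1} already asserts, so there is no substantive difference.
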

\begin{proof}
Let $f$ be as in (\ref{def-f}). By definition of $Q$,
\begin{equation}
\label{aux1}
\min_{\x\in\{-1,1\}^{n}}\,f(\x)\,=\,\min_{(\x,x_0)\in\{-1,1\}^{n+1}}\,\{\,Q(\x,x_0):\:x_0=1\,\}.\end{equation}
On the other hand, let $(\x^*,x_0^*)\in\{-1,1\}^{n+1}$ be a global minimizer of 
$\min\{Q(\x,x_0):(\x,x_0)\in\{-1,1\}^{n+1}\}$.
Then by homogeneity of $Q$, $(-\x^*,-x_0)$ is also a global minimizer and so 
one may decide arbitrarily to fix $x_0=1$. That is,
\[\min_{(\x,x_0)\in\{-1,1\}^{n+1}}\,Q(\x,x_0)\,=\,\min_{(\x,x_0)\in\{-1,1\}^{n+1}}\,\{\,Q(\x,x_0):\:x_0=1\,\},\]
which combined with (\ref{aux1}) and Lemma \ref{lemma-1} yields the desired result.
\end{proof}
Next, write $Q(\x,x_0)=(\x,x_0)\Q(\x,x_0)^T$
for an appropriate real symmetric matrix $\Q\in\R^{(n+1)\times (n+1)}$,
and introduce the semidefinite programs
\begin{equation}
\label{lower}
\min_{\X}\:\{\langle \Q,\X\rangle \:: \X\succeq0;\:
X_{ii}\,=\,1,\quad i=1,\ldots,n+1\}
\end{equation}
with optimal value denoted by $\min\Q_+$,
and
\begin{equation}
\label{upper}
\max_{\X}\:\{\langle \Q,\X\rangle \:: \X\succeq0;\:
X_{ii}\,=\,1,\quad i=1,\ldots,n+1\}
\end{equation}
with optimal value denoted by $\max\Q^+$.
\begin{proposition}
\label{prop-main}
Let $\P$ be the problem defined in (\ref{def-newpb}) with optimal value $f^*$ and let $\rho(\c,\F)$ be as in (\ref{def-rcf}).
If $f^*<+\infty$ then
\begin{equation}
\label{thmain-1}
\min\Q_+\leq f^*\leq \frac{2}{\pi}\min\Q_++(1-\frac{2}{\pi})\max\Q^+,
\end{equation}
where $\Q$ is the real symmetric matrix asociated with the quadratic form (\ref{quadraticform}) and $\min\Q_+$ (resp. $\max\Q^+$)  is the optimal value of the semidefinite  program (\ref{lower}) (resp. (\ref{upper})).
Moreover, if $\min\Q_+ >\rho(\c,\F)$ then $\P$ has no feasible solution (i.e., $f^*=+\infty$).
\end{proposition}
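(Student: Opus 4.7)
The plan is to sandwich $f^*$ between the values of the two SDPs (\ref{lower}) and (\ref{upper}) using Theorem~\ref{th-linear} together with Goemans--Williamson randomized rounding, and then read off the infeasibility criterion by contraposition. The lower bound $\min\Q_+\leq f^*$ is routine: by Theorem~\ref{th-linear}, some $(\x^*,x_0^*)\in\{-1,1\}^{n+1}$ achieves $Q(\x^*,x_0^*)=f^*$, and the rank-one matrix $\X^*:=(\x^*,x_0^*)(\x^*,x_0^*)^T$ is PSD with unit diagonal and satisfies $\langle\Q,\X^*\rangle=Q(\x^*,x_0^*)=f^*$, so $\X^*$ is feasible for (\ref{lower}) and $\min\Q_+\leq f^*$.

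For the upper bound I would apply Goemans--Williamson rounding to an optimal $\hat{\X}$ of (\ref{lower}). Writing $\hat{\X}=V^T V$ with unit-norm columns $v_1,\ldots,v_{n+1}$, picking a standard Gaussian $g\in\R^{n+1}$, and setting $y_i:=\mathrm{sgn}(v_i^T g)$, the classical identity $E[y_iy_j]=\tfrac{2}{\pi}\arcsin(\hat X_{ij})$ yields
\[
E[\y^T\Q\,\y]\;=\;\frac{2}{\pi}\sum_{i,j}Q_{ij}\,\arcsin(\hat X_{ij}).
\]
I then invoke Nesterov's key observation: since $\arcsin(t)-t=\sum_{k\geq 1}a_k\,t^{2k+1}$ with $a_k>0$, the matrix $\arcsin[\hat\X]-\hat\X=\sum_{k\geq 1}a_k\,\hat\X^{\circ(2k+1)}$ is PSD (Schur products preserve the PSD cone), and its diagonal entries all equal $\pi/2-1$. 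Hence $Y:=(\pi/2-1)^{-1}(\arcsin[\hat\X]-\hat\X)$ is PSD with unit diagonal and so feasible for (\ref{upper}), giving $\langle\Q,Y\rangle\leq\max\Q^+$. Splitting $\arcsin[\hat\X]=\hat\X+(\pi/2-1)Y$ produces
\[
f^*\;\leq\;E[\y^T\Q\y]\;=\;\frac{2}{\pi}\langle\Q,\hat\X\rangle+\Bigl(1-\frac{2}{\pi}\Bigr)\langle\Q,Y\rangle\;\leq\;\frac{2}{\pi}\min\Q_++\Bigl(1-\frac{2}{\pi}\Bigr)\max\Q^+,
\]
which is the right-hand inequality of (\ref{thmain-1}); the first inequality holds because $\y$ is almost surely a feasible point of the MAX-CUT identified with $f^*$ in Theorem~\ref{th-linear}.

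The infeasibility criterion then follows by contraposition: if $\P$ is feasible, pick any $\x\in\Delta$; by (\ref{bound-rho}), $f^*\leq\c^T\x+\x^T\F\x\leq\rho(\c,\F)$, and combined with the already-established $\min\Q_+\leq f^*$ this forces $\min\Q_+\leq\rho(\c,\F)$. The single nontrivial ingredient in the whole argument is Nesterov's Schur-power PSD fact for $\arcsin[\hat\X]-\hat\X$; the rest is bookkeeping. Notably, no positive semidefiniteness of $\Q$ is needed---only the unit-diagonal condition $\hat X_{ii}=1$, which is precisely what makes both $\hat\X$ and $Y$ legitimate elliptope points and thereby delivers the natural convex combination of $\min\Q_+$ and $\max\Q^+$ as the upper bound.
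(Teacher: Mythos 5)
Your proof is correct. The skeleton matches the paper's: the lower bound $\min\Q_+\leq f^*$ comes from the rank-one feasible point $(\x^*,x_0^*)(\x^*,x_0^*)^T$ of (\ref{lower}) via Theorem~\ref{th-linear}, and the infeasibility criterion is obtained exactly as in the paper, by combining $\min\Q_+\leq f^*$ with $f^*\leq\rho(\c,\F)$ (from (\ref{bound-rho})/(\ref{value})) when $\Delta\neq\emptyset$. The one place you genuinely diverge is the upper bound in (\ref{thmain-1}): the paper disposes of both inequalities in one line by citing Nesterov \cite{nesterov}, whereas you reprove Nesterov's theorem from scratch --- Goemans--Williamson hyperplane rounding of an optimizer $\hat\X$ of (\ref{lower}), the Grothendieck identity $E[y_iy_j]=\tfrac{2}{\pi}\arcsin(\hat X_{ij})$, and the key observation that $\arcsin[\hat\X]-\hat\X$ is PSD with constant diagonal $\pi/2-1$ (Schur-product theorem applied to the positive-coefficient Taylor series), so that its normalization is a feasible point of (\ref{upper}). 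This is in fact Nesterov's own argument, so nothing is mathematically new, but your version is self-contained and makes explicit why no sign or PSD assumption on $\Q$ is needed --- only the unit-diagonal constraint. All the small technical points you rely on check out: the feasible set of (\ref{lower}) is compact so $\hat\X$ exists, $\vert\hat X_{ij}\vert\leq 1$ so the arcsine series applies (and converges absolutely even at $\pm1$), the rounded vector lies in $\{-1,1\}^{n+1}$ almost surely so $f^*\leq E[\y^T\Q\y]$ via Theorem~\ref{th-linear}, and $1-\tfrac{2}{\pi}>0$ so the bound $\langle\Q,Y\rangle\leq\max\Q^+$ can be substituted into the convex combination.
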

\begin{proof}
The bounds in (\ref{thmain-1}) are from Nesterov \cite{nesterov}. In addition, one may also use the bounds provided in Marshall \cite{marshall} which sometimes improve those in (\ref{thmain-1}).
Next, let $\Delta:=\{\x\in\{-1,1\}^n: \A\x=\b\}$ and assume that $\Delta\neq\emptyset$. Then by
(\ref{bound-rho}) and (\ref{value}),  $\min\Q_+\leq f^*\leq\rho(\c,\F)$.
Therefore $\Delta=\emptyset$ if $\min\Q_+>\rho(\c,\F)$.
\end{proof}
The quality of the upper bound in (\ref{thmain-1}) depends strongly on the magnitude of the ``penalty coefficient" $2\rho(\c,\F)+1$ 
in the definition of the function $f$ in (\ref{def-f}). Whether or not the penalty parameter $\rho(\c,\F)$ could be 
taken smaller (at least in some cases) has not been investigated and is beyond the scope of this paper. 
However for a practical use of relaxations 
what matters most is the quality of the {\it lower} bound $\min\Q_+$ which in principle is very good for MAX-CUT problems
(even if $\Q\not\geq0$ or $\Q\not\succeq0$).
For instance in a Branch \& Bound algorithm
the lower bound $\min\Q_+$ has an important impact in the pruning of nodes in the search tree.

\subsection{Sparsity}
Hence to each 0/1 program (\ref{def-pb}) one may associate a graph $G=(V,E)$ with $n+1$ nodes and an arc $(i,j)\in E$
connects the nodes $i,j\in V$ if and only if the coefficient $\Q_{ij}$ of the quadratic form $Q(\x,x_0)$ does not vanish.
{\it Sparsity} properties of $G$ are of primary interest, e.g. for computational reasons.
From the definition of the matrix $\Q$, this sparsity is in turn related to sparsity 
of the matrix $\F+(2\rho(\c,\F)+1)\cdot \A^T\A$, hence of sparsity of $\F$ and $\A^T\A$. 

In particular, two nodes $i,j$ are not connected if $\F_{ij}=0$ and 
$\A_{ki}\A_{kj}=0$ for all $k=1,\ldots,m$, that is, in any of the constraints $\A\x=\b$, 
at most one of the two variables $(x_i,x_j)$ appears (a structural condition). A weaker condition 
(non-structural as it depends on
values of entries of $\A$) is that $\sum_k\A_{ki}\A_{kj}=0$.

\subsection{Extension to inequalities}

Let $f(\x):=\c^T\x+\x^T\F\x$ for some $\c\in\R^n$ and some
$\F^T=\F\in\R^{n\times n}$, and consider the problem:
\begin{equation}
\label{def-pb2}
\P:\qquad f^*\,=\,\min_\x\,\{\,f(\x):\:\A\,\x\,\leq\,\b;\quad\x\in\{0,1\}^n\,\},
\end{equation}
for some cost vector $\c\in\Z^n$, some matrix $\A\in\Z^{m\times n}$, and some vector $\b\in\Z^m$.
We may and will replace (\ref{def-pb2}) with the equivalent pure  integer program:
\[\P':\qquad f^*\,=\,\min_{\x,\y}\,\{\,f(\x):\:\A\,\x+\y\,=\,\b;\quad\x\in\{0,1\}^n;\:\y\in\N^m\,\}.\]
Next, as $\x\in\{0,1\}^n$ we can bound each integer variable $y_j$ by $M_j:=b_j-\min \{\A_j\,\x:\x\in\{0,1\}^n\}$, $j=1,\ldots,m$,
where $\A_j$ denotes the $j$-th row vector of the matrix $\A$; and in fact
$M_j=b_j-\sum_i\min[0,\A_{ji}]$, $j=1,\ldots,m$. Then we may use the standard decomposition of $y_j$ into a weighted sum of boolean variables :
\[y_j=\sum_{k=0}^{s_j}2^k z_{jk},\qquad z_{jk}\in\{0,1\}^n,\quad j=1,\ldots,m,\]
(where $s_j:=\lceil\log(M_j)\rceil$) and replace (\ref{def-pb2}) with the equivalent 0/1 program:
 \[f^*\,=\,\min_{\x,\z}\,\{\,f(\x):\:\A_j^T\x+\sum_{k=0}^{s_j}2^k z_{jk}\,=\,b_j, \:j\leq m;\:(\x,\z)\in\{0,1\}^{n+s}\,\},\]
 (where $s:=\sum_j(1+s_j)$) which is of the form (\ref{def-pb}).
 
 \subsection{Extension to polynomial programs}
 
 Let $f\in\R[\x]$ be a polynomial of even degree $d>2$, written as
 \[\x\mapsto f(\x)\,=\,\sum_{\alpha\in\N^n}f_\alpha\,\x^\alpha,\]
 for some vector of coefficients $\f=(f_\alpha)\in\R^{s(d)}$ (with $s(d)={n+d\choose n}$). Given a sequence
 $\y=(y_\alpha)$, $\alpha\in\N^n$, define the Riesz functional
 $L_\y:\R[\x]\to\R$ by:
 \[f\mapsto L_\y(f)\,=\,\sum_{\alpha\in\N^n} f_\alpha\,y_\alpha,\qquad f\in\R[\x].\]
 Consider the polynomial program:
 \begin{equation}
 \label{def-pb-f}
f^*\,=\,\min\:\{\,f(\x):\: \A\x=\b;\quad\x\in\{-1,1\}^n\,\},
\end{equation}
on the hyper cube $\{-1,1\}^n$. Let $d':=d/2$, $\x\mapsto g_j(\x):=1-x_j^2$, $j=1,\ldots,n$, 
and with $f$  let us associate the scalars:
\begin{eqnarray*}
r^1_{f}&=&\min\,\{\,L_\y(f): \M_{d'}(\y)\succeq0;\:\M_{d'-1}(g_j\,\y)=0,\:j=1,\ldots,n\,\}\\
r^1_{f}&=&\max\,\{\,L_\y(f): \M_{d'}(\y)\succeq0;\:\M_{d'-1}(g_j\,\y)=0,\:j=1,\ldots,n\,\}
\end{eqnarray*}
where $\M_{d'}(\y)$ (resp. $\M_{d'-1}(g_j\,\y)$) is the moment matrix (resp. localizing matrix) of order $d'$ associated with 
the real sequence $\y=(y_\alpha)$, $\alpha\in\N^n$, (resp. with the sequence $\y$ and the polynomial $g_j$). 
It turns out that $r^1_f$ (resp. $r^2_f$) is the optimal value of the first SDP-relaxation of the Lasserre-SOS hierarchy
associated with the optimization problem $\min\,({\rm resp.}\,\max)\{f(\x):\x\in\{-1,1\}^n\}$ and so
$r^1_f\leq\min\{f(\x):\x\in\{-1,1\}^n\}$ whereas $r^2_f\geq\max\{f(\x):\x\in\{-1,1\}^n\}$.
For more details, see e.g. 
\cite{lasserrebook1,lasserrebook2}. Next, if we define
\begin{equation}
\label{def-r-f}
\rho_f\,:=\,\max_{i=1,2}\vert r^i_{f}\vert,
\end{equation}
then it is straightforward to verify that $\rho_f\geq\max\{\,\vert f(\x)\vert:\x\in\{-1,1\}^n\,\}$.
Then we have the following analogue of Lemma \ref{lemma-1}:
 \begin{lemma}
\label{lemma-11}
Let $f^*$ be as (\ref{def-pb-f}) and
$\rho_f$ as in (\ref{def-r-f}). If $f^*<+\infty$ then 
\begin{equation}
\label{poly-pb}
f^*=\rho\,:=\,\min_{\x\in \{-1,1\}^n}\:f(\x)+(2\,\rho_f+1)\cdot\Vert \A\,\x-\b\Vert^2
\end{equation}
(a polynomial optimization problem on$\{-1,1\}^n$).
Moreover $f^*=+\infty$ if and only if $\rho>\rho_f$.
\end{lemma}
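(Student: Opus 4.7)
My plan is to mimic, almost verbatim, the proof of Lemma \ref{lemma-1}, since the polynomial-degree generalization changes nothing in the structure of the argument: the only role of the quadratic form $\c^T\x+\x^T\F\x$ in the original proof was to be bounded in absolute value on $\{-1,1\}^n$ by $\rho(\c,\F)$, and here the analogous bound $|f(\x)|\leq \rho_f$ on $\{-1,1\}^n$ is already recorded just before the lemma (it follows from the fact that $r^1_f$ and $r^2_f$ are respectively a lower and an upper bound for the min and max of $f$ on $\{-1,1\}^n$ via the Lasserre-SOS hierarchy on the hypercube, a result cited from \cite{lasserrebook1,lasserrebook2}).

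Concretely, I would set $\Delta:=\{\x\in\{-1,1\}^n:\A\x=\b\}$ and define $F(\x):=f(\x)+(2\rho_f+1)\cdot\|\A\x-\b\|^2$, so that $\rho=\min_{\x\in\{-1,1\}^n}F(\x)$. On $\Delta$ one has trivially $F(\x)=f(\x)$, and since the bound $\rho_f\geq\max\{|f(\x)|:\x\in\{-1,1\}^n\}$ forces $f^*\leq \rho_f$, we get $\min_\Delta F=\min_\Delta f=f^*\leq\rho_f$. For $\x\in\{-1,1\}^n\setminus\Delta$, the integrality of $\A$ and $\b$ yields $\|\A\x-\b\|^2\geq 1$ (since the entries of $\A\x-\b$ are integers and the vector is nonzero), so
\begin{equation*}
F(\x)\;\geq\;-\rho_f+(2\rho_f+1)\;=\;\rho_f+1\;>\;\rho_f\;\geq\;f^*.
\end{equation*}
Therefore, when $\Delta\neq\emptyset$, the global minimum of $F$ over $\{-1,1\}^n$ is attained on $\Delta$ and equals $f^*$, i.e.\ $\rho=f^*$.

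For the converse direction of the ``moreover'' clause, the same dichotomy yields the criterion: if $\Delta\neq\emptyset$ then $\rho=f^*\leq\rho_f$, while if $\Delta=\emptyset$ then the minimum is taken over the strictly larger regime $\{-1,1\}^n\setminus\Delta$ on which we have just shown $F(\x)>\rho_f$, forcing $\rho>\rho_f$. The equivalence ``$f^*=+\infty\iff\rho>\rho_f$'' follows.

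I do not expect any real obstacle: the only substantive input is the a priori bound $\rho_f\geq\max\{|f(\x)|:\x\in\{-1,1\}^n\}$, which the paper has already justified via the Lasserre-SOS hierarchy on the hypercube; and the only other ingredient is the integrality argument giving $\|\A\x-\b\|^2\geq 1$ off $\Delta$, which is identical to the quadratic case. The potentially delicate point to state carefully is the contrapositive direction of the equivalence, which needs the strict inequality $F(\x)>\rho_f$ off $\Delta$ (and equality $\min_\Delta f\leq\rho_f$ on $\Delta$) to separate the two regimes cleanly; this is exactly what the penalty coefficient $2\rho_f+1$ is engineered to deliver.
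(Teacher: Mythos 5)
Your proof is correct and is essentially the paper's own argument: the paper omits the proof of this lemma precisely because it is a verbatim adaptation of the proof of Lemma \ref{lemma-1}, which is exactly what you reproduce (the bound $\rho_f\geq\max\{|f(\x)|:\x\in\{-1,1\}^n\}$ replacing $\rho(\c,\F)$, the integrality argument giving $\Vert\A\x-\b\Vert^2\geq 1$ off $\Delta$, and the resulting dichotomy for the infeasibility criterion). No gaps.
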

\noindent
The proof being almost a verbatim copy of that of Lemma \ref{lemma-1}, is omitted.

 As for the quadratic case and with same arguments, one may also show that if $d$ is even, the polynomial optimization problem
 (\ref{poly-pb}) is equivalent to minimizing the homogeneous polynomial $\tilde{f}$ of degree $d$ on the hypercube
 $\{-1,1\}^{n+1}$, where
 \[(\x,x_0)\mapsto \tilde{f}(\x):=x_0^{d}f(\x/x_0)+(2\rho_f+1)\,x_0^{d-2}\cdot\Vert \A\,\x-x_0\,\b\Vert^2.\]
 But since it is not a MAX-CUT problem, to obtain a lower bound on $f^*$ one may just as well consider solving the first level of the Lasserre-SOS hierarchy associated with (\ref{poly-pb}) or even directly with (\ref{def-pb-f}). The advantage of using 
 the formulation (\ref{poly-pb}) is that one always minimizes on the hypercube $\{-1,1\}^n$ instead of minimizing on 
 the subset  $\{-1,1\}^{n}\cap \{\x:\A\,\x=\b\}$ of the hypercube which is problem dependent.

\section{Some numerical experiments}

All semidefinite programs were solved by using the GloptiPoly software
dedicated to solving the Generalized Problem of Moments and which implements the 
Lasserre-SOS hierarchy of semidefinite relaxations. For more details the interested reader is referred to 
Henrion et al. \cite{gloptipoly}. Typically, for a MAX-CUT problem of size $n=100$,
solving the Shor relaxation with GloptiPoly (and calling the semidefinite solver MOSEK \cite{mosek})
takes approximately 42s on a Macbook-pro lap-top with Intel Core i5 processor at 2.6 GHz.

\subsection{Some $0/1$-knapsack examples}
\label{0-1-knapsack}
To test the efficiency of the MAX-CUT formulation we have first
considered $0/1$-knapsack problems of small size (up to $n=15$ variables) and
compared the first semidefinite relaxation (or Shor-relaxation) of the MAX-CUT formulation
(\ref{quad-pb}) with some other relaxations, and in particular with:

- The LP-relaxation of (\ref{def-newpb}) (when $\F=0$) which consists of replacing  the constraint
$\x\in\{-1,1\}^n$ with $\x\in [-1,1]^n$, and

- The first semidefinite relaxation of (\ref{def-newpb}) in the Lasserre-SOS hierarchy.
\begin{example}
\label{ex1}
To evaluate the quality of the lower bound obtained with the MAX-CUT formulation consider 
the following simple linear knapsack-type examples:
\begin{equation}
\label{knapsack}
\min\,\{ \c^T\x: \:\a^T\x=b;\:\x\in \{-1,1\}^n\,\},\end{equation}
on $\{-1,1\}^n$, with $4$ and $10$ variables. 
For $n=4$, $\c=(13,\,11,\, 7,\, 3)$ and $\a=(3,\, 7,\, 11,\, 13)$,
while for $n=10$, $\c=(37,\, 31,\, 29,\, 23,\, 19 ,\,17,\, 13,\, 11,\, 7,\, 3)$,
and $\a=(3,\, 7 ,\,11,\, 13,\, 17,\, 19,\, 23,\, 29,\, 31,\, 37)$ and for $n=15$
\[\a=(3,\, 7 ,\,11,\, 13,\, 17,\, 19,\, 23,\, 29,\, 31,\, 37,\,41,\,43,\,47,\,51,\,53)\] and 
$\c=(53,\,51,\,47,\,43,\,41,\,37,\, 31,\, 29,\, 23,\, 19 ,\,17,\, 13,\, 11,\, 7,\, 3)$.

The right-hand-side $b$ is taken into $[\,-\vert \a\vert,\vert\a\vert\,]\cap \Z$. 
Figure \ref{fig1} displays the difference $\min\Q_+-\min{\rm LP}$ 
where the lower bound $\min{\rm LP}$ is obtained by relaxing the 
integrality constraints $\x\in\{-1,1\}^n$ to the box constraint $\x\in [-1,1]^n$ and solving the resulting LP.
On the $x$-axis one reads $b+\vert\a\vert$. As expected the lower bound $\min\Q_+$ is much better than $\min{\rm LP}$.
In fact the cases where 
the LP-bound is slightly better is for right-hand-side $b$ such that the relaxation provides the optimal value $f^*$.
\begin{figure}[htbp]
\begin{center}
\resizebox{1\textwidth}{!}
{\includegraphics{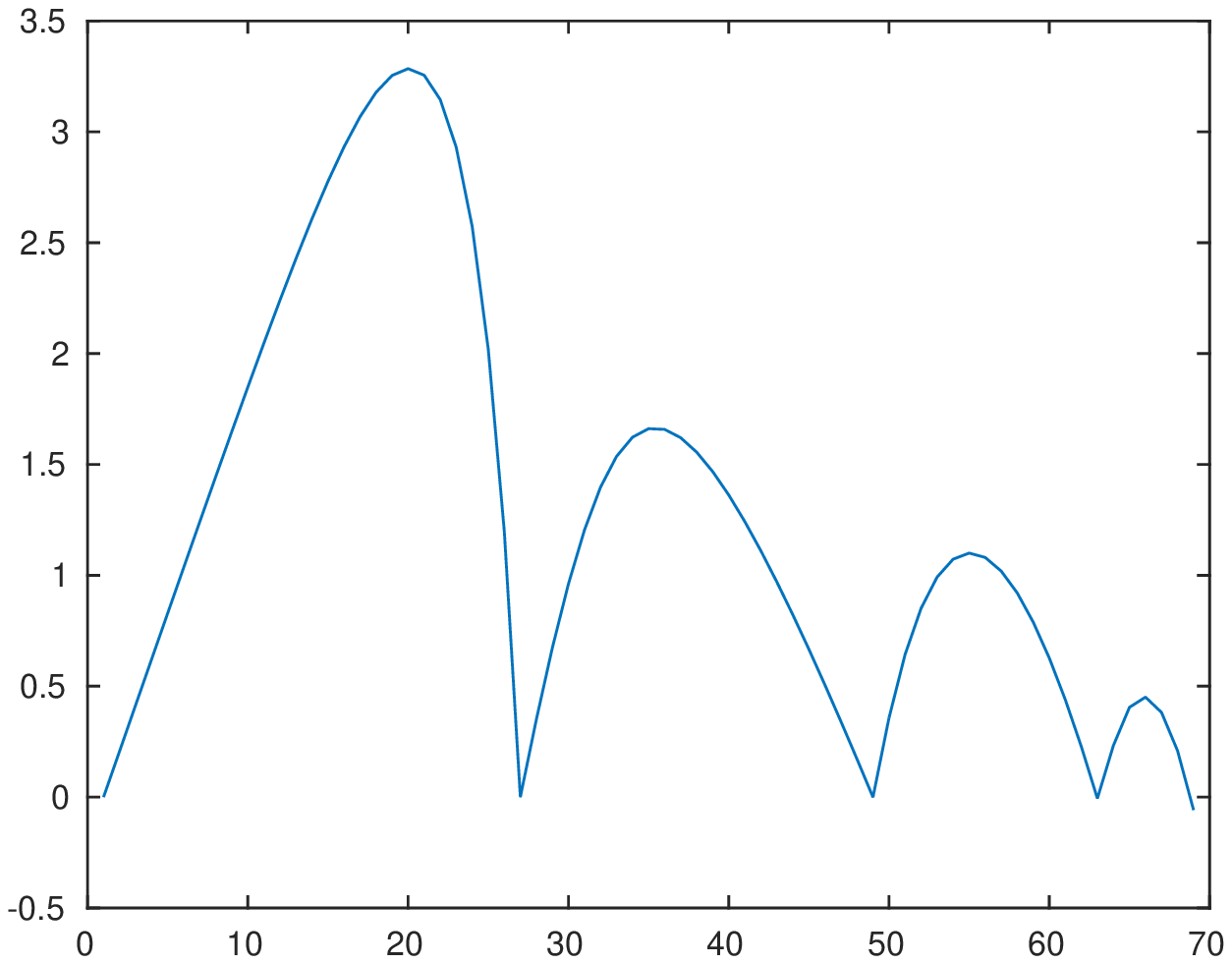}\includegraphics{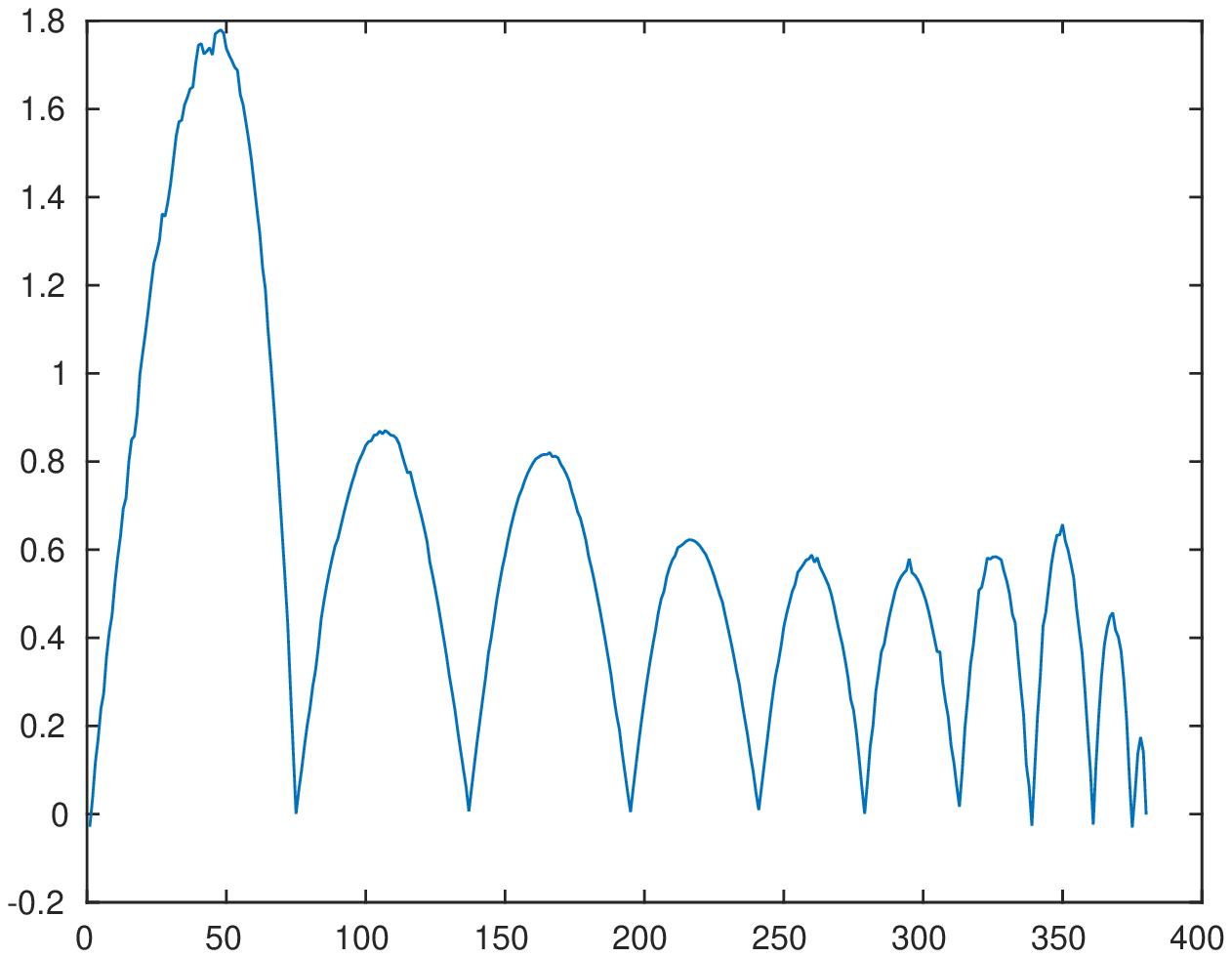}
\includegraphics{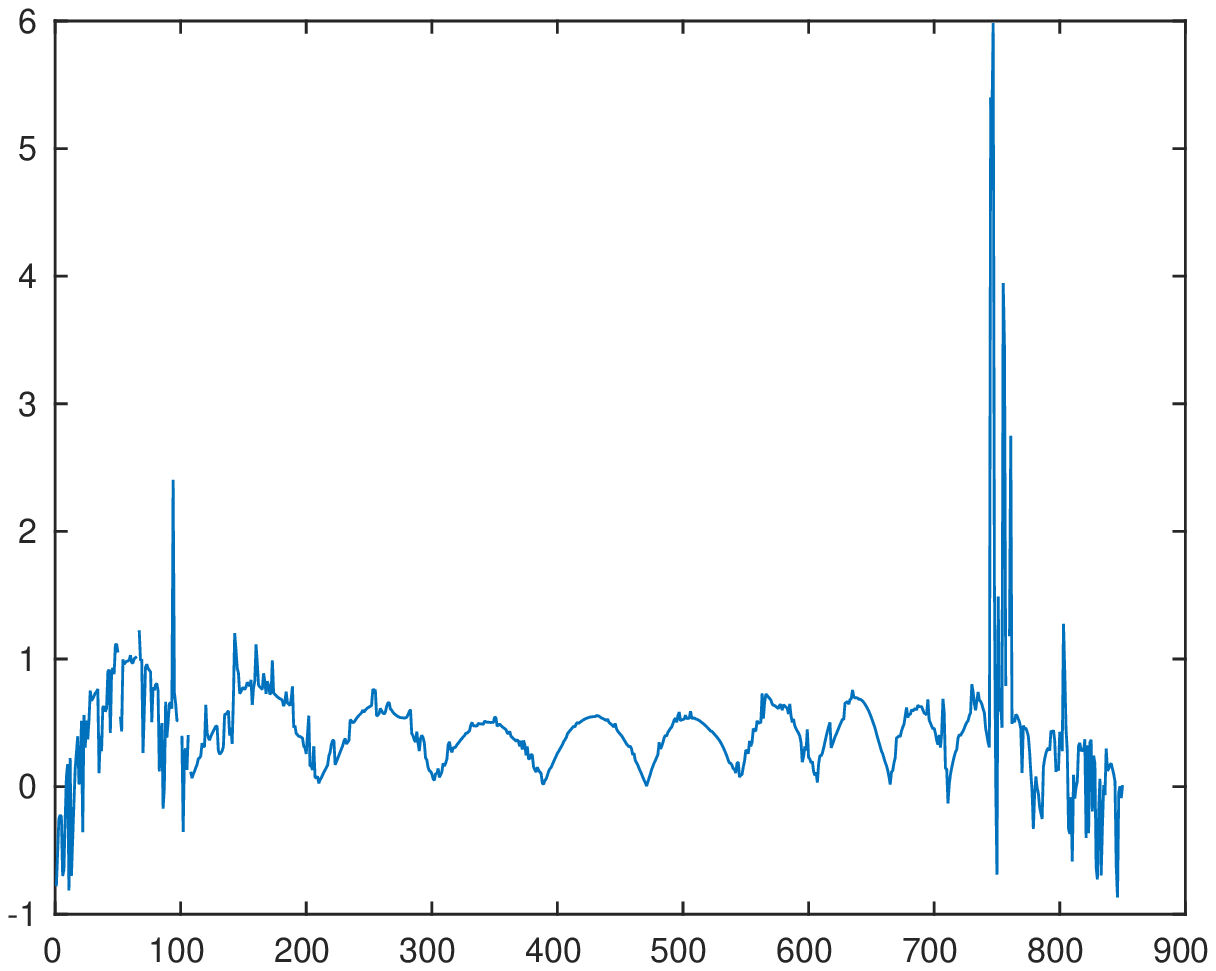}}
\caption{Difference $\min\Q_+-\min{\rm LP}$ with n=(4,10,15); One reads $b+\vert\a\vert$
on the $x$-axis}
\label{fig1}
\end{center}
\end{figure}

Moreover Figure \ref{fig2} displays the difference $\min\Q_+-\min\hat{\Q}_+$ where
$\min\hat{\Q}_+$ is the optimal value of the first SDP-relaxation of the Lasserre-SOS hierarchy
applied to the initial formulation (\ref{knapsack}) of the knapsack problem
where one has even included the redundant constraints $x_i\,(\a^T\x-b)=0$, $i=1,\ldots,n$.
One observes that in most cases the lower bound $\min\Q_+$ is slightly better than $\min\hat{\Q}_+$.

This is encouraging since the Lasserre-SOS hierarchy \cite{lasserre1,lasserre2} is known to produce
good lower bounds in general, and especially at the first level of the hierarchy 
for MAX-CUT problems whose matrix $\Q$ of the associated 
quadratic form has certain properties, e.g.,  $Q_{ij}\geq0$ for all $i,j$ or $\Q\succeq0$ (in the maximizing case);
see e.g. Marshall \cite{marshall}.
\begin{figure}[htbp]
\begin{center}
\resizebox{0.8\textwidth}{!}
{\includegraphics{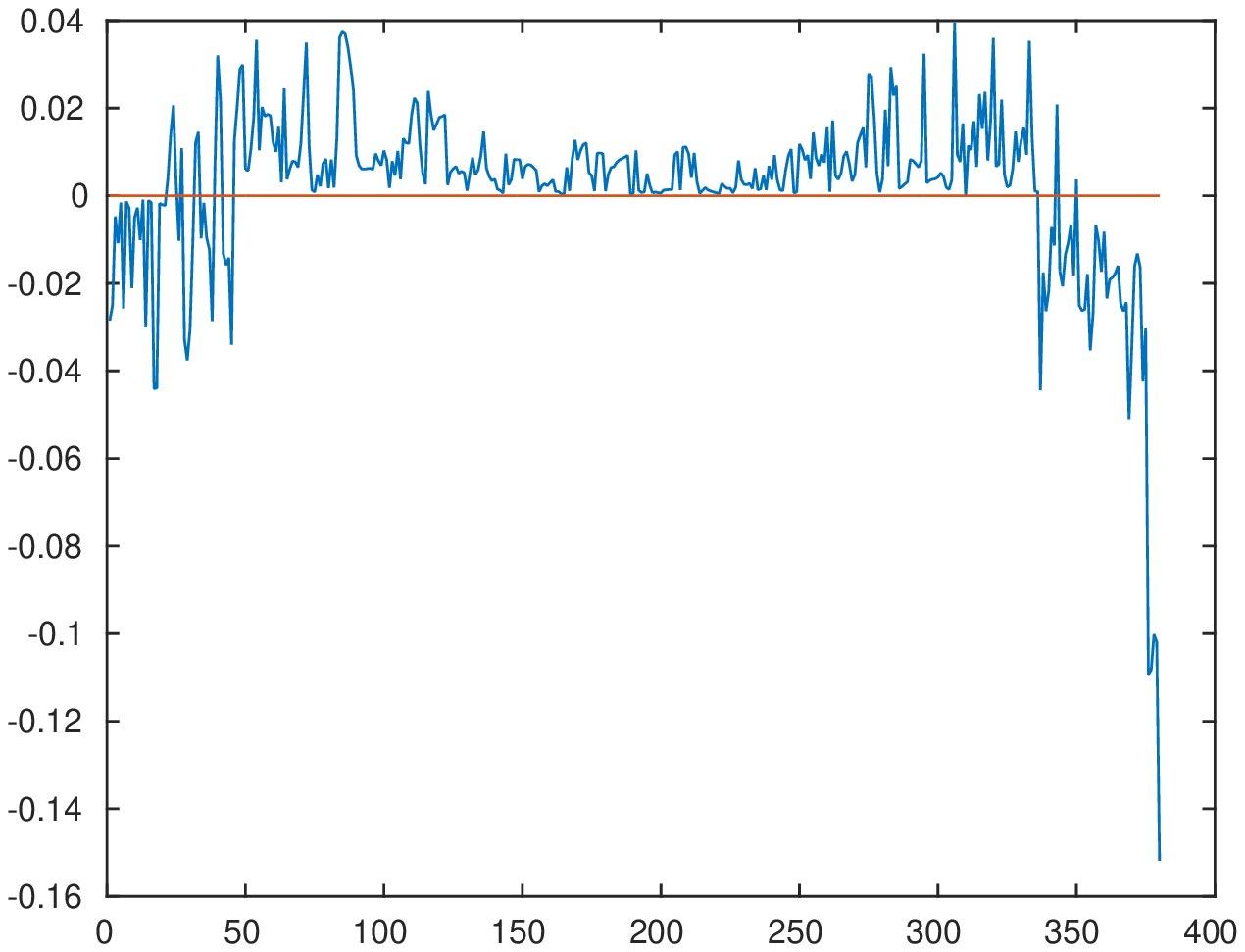}
\includegraphics{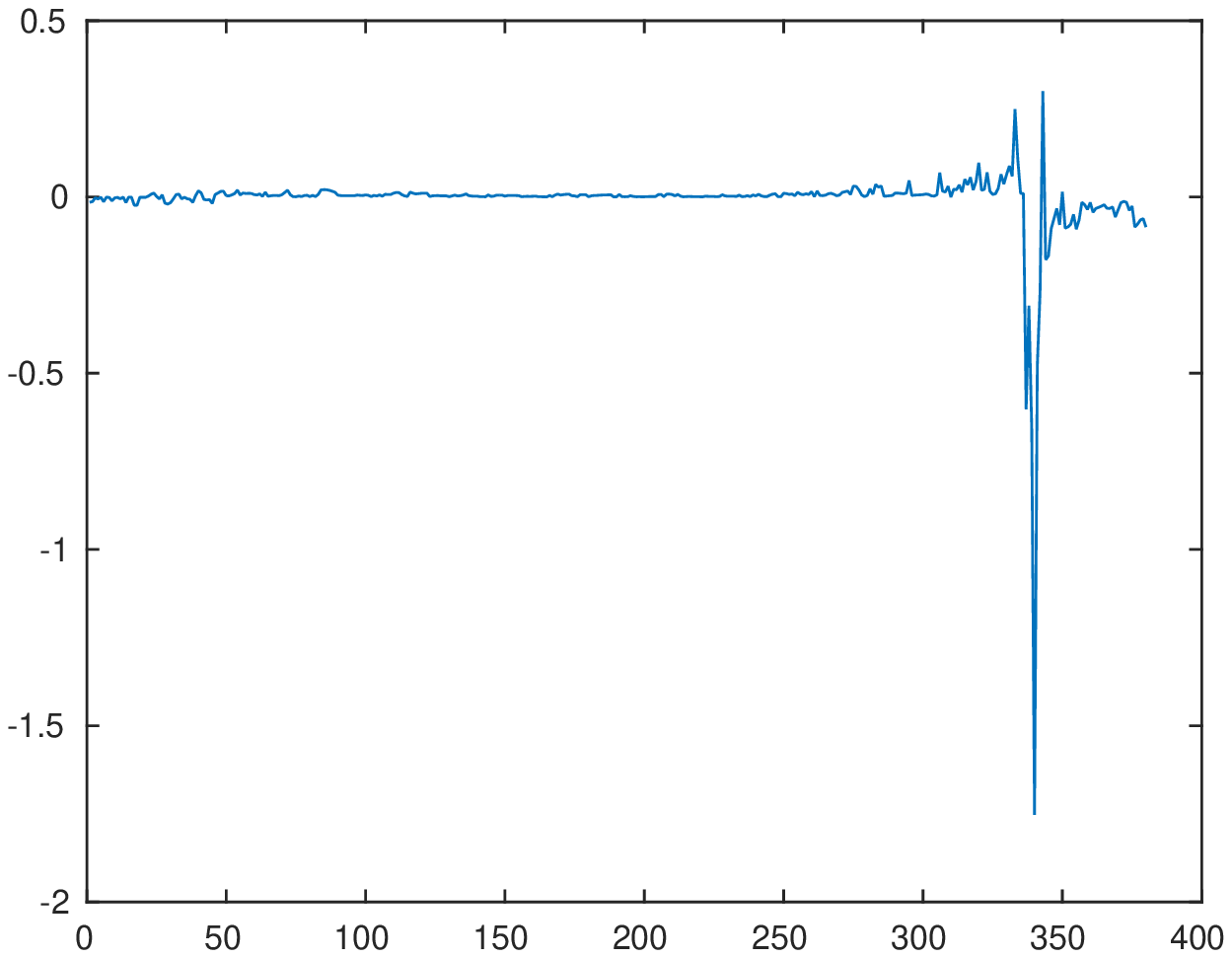}}
\caption{Difference $\min\Q_+-\min\hat{\Q}_+$ (n=10) (left) and relative difference
$100*(\min\Q_+-\min\hat{\Q}_+)/\min\hat{\Q}_+$; One reads $b+\vert\a\vert$ on the $x$-axis}
\label{fig2}
\end{center}
\end{figure}
\end{example}
\begin{example}
\label{ex2}
In a second sample of linear knapsack problems (\ref{knapsack}) with $n=10,15$, we have chosen the same vector $\a$
as in Example \ref{ex1} but now with a cost criterion of the form $c(i)=a(i)+s\,\eta$, $i=1,\ldots,n$, where $\eta$ is a random variable
uniformly distributed on $[0,1]$, and $s$ is a weighting factor. The reason is that knapsack problems with ratii $c(i)/a(i)\approx 1$ for all $i$, can be difficult to solve. As before the right-hand-side $b$ is taken into $[\,-\vert \a\vert,\vert\a\vert\,]\cap \Z$. 
Figure  \ref{fig4} displays results obtained for $s=10$, and $n=10,15$.
\begin{figure}[htbp]
\begin{center}
\resizebox{0.8\textwidth}{!}
{\includegraphics{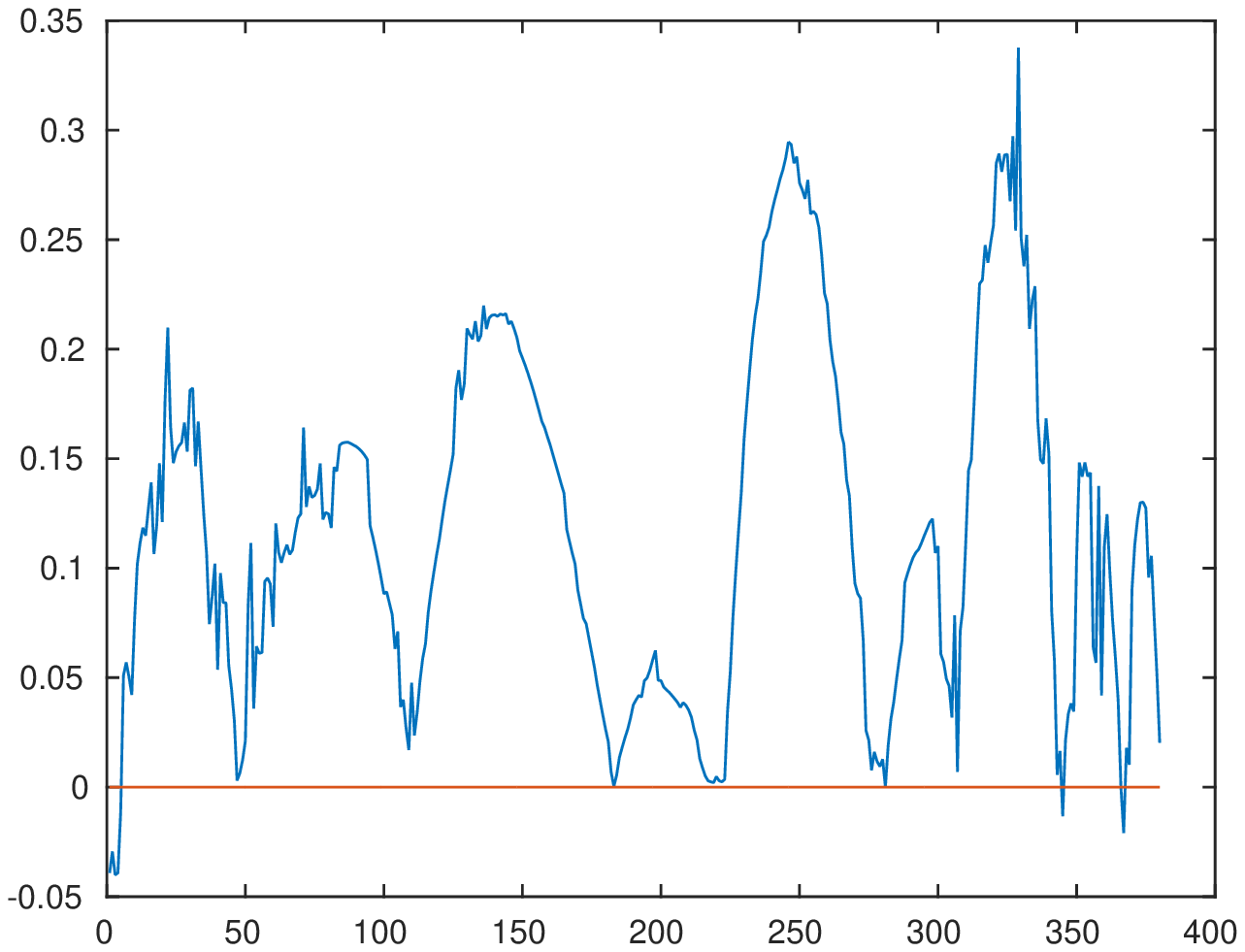}
\includegraphics{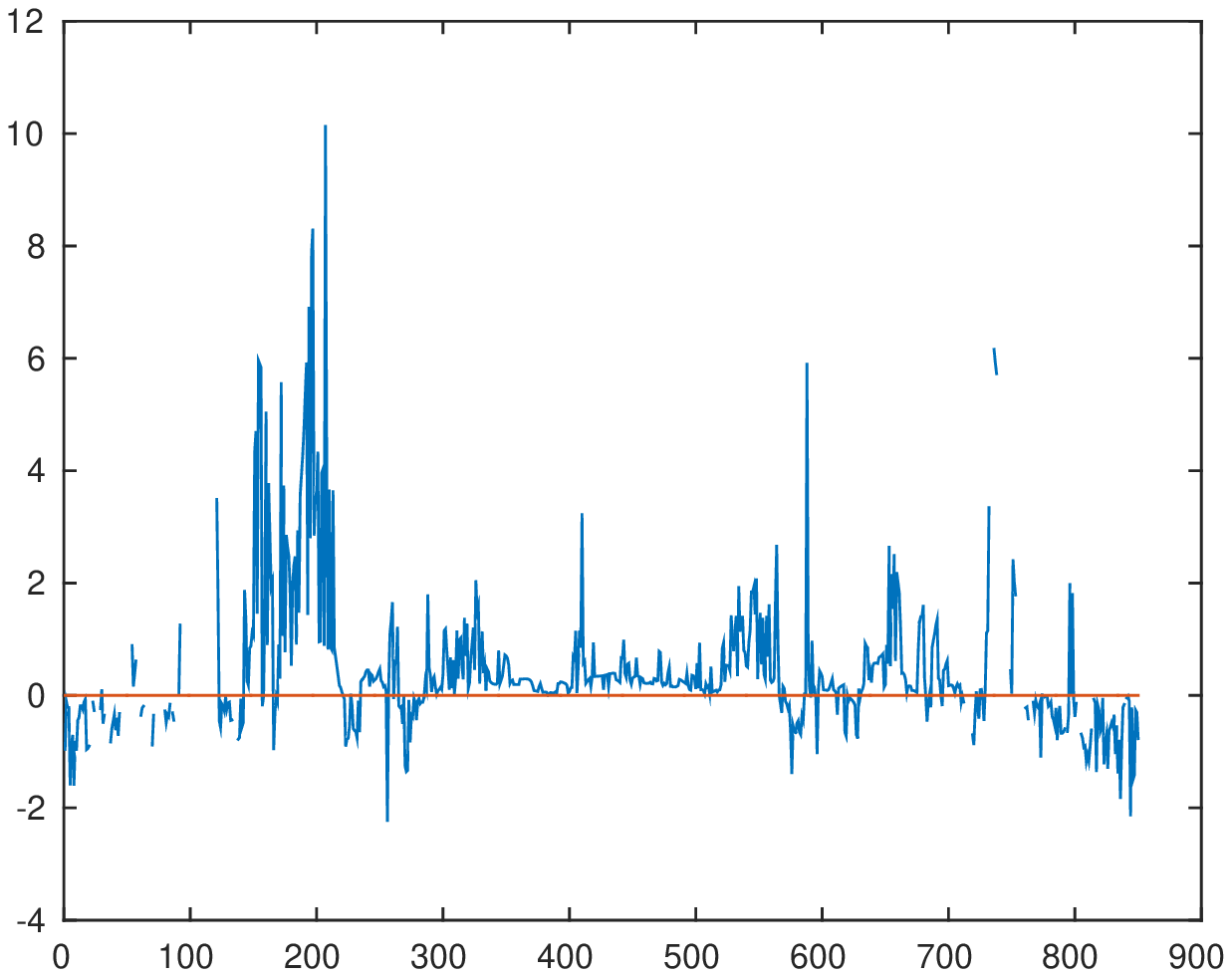}}
\caption{Difference $\min\Q_+-\min{\rm LP}$, $\c=\a+10*\eta$ (n=10,15); $b+\vert\a\vert$ on the $x$-axis}
\label{fig4}
\end{center}
\end{figure}
\end{example}
Finally, as for Example \ref{ex1}, Figure \ref{fig6} displays the difference $\min\Q_+-\min\hat{\Q}_+$ (where
$\min\hat{\Q}_+$ is the optimal value of the first SDP-relaxation of the Lasserre-SOS hierarchy
applied to the initial formulation (\ref{knapsack}) of the knapsack problem
where one has even included the redundant constraints $x_i\,(\a^T\x-b)=0$, $i=1,\ldots,n$).
Again one observes that in most cases the lower bound $\min\Q_+$ is slightly better than $\min\hat{\Q}_+$.
\begin{figure}[htbp]
\begin{center}
\resizebox{0.8\textwidth}{!}
{\includegraphics{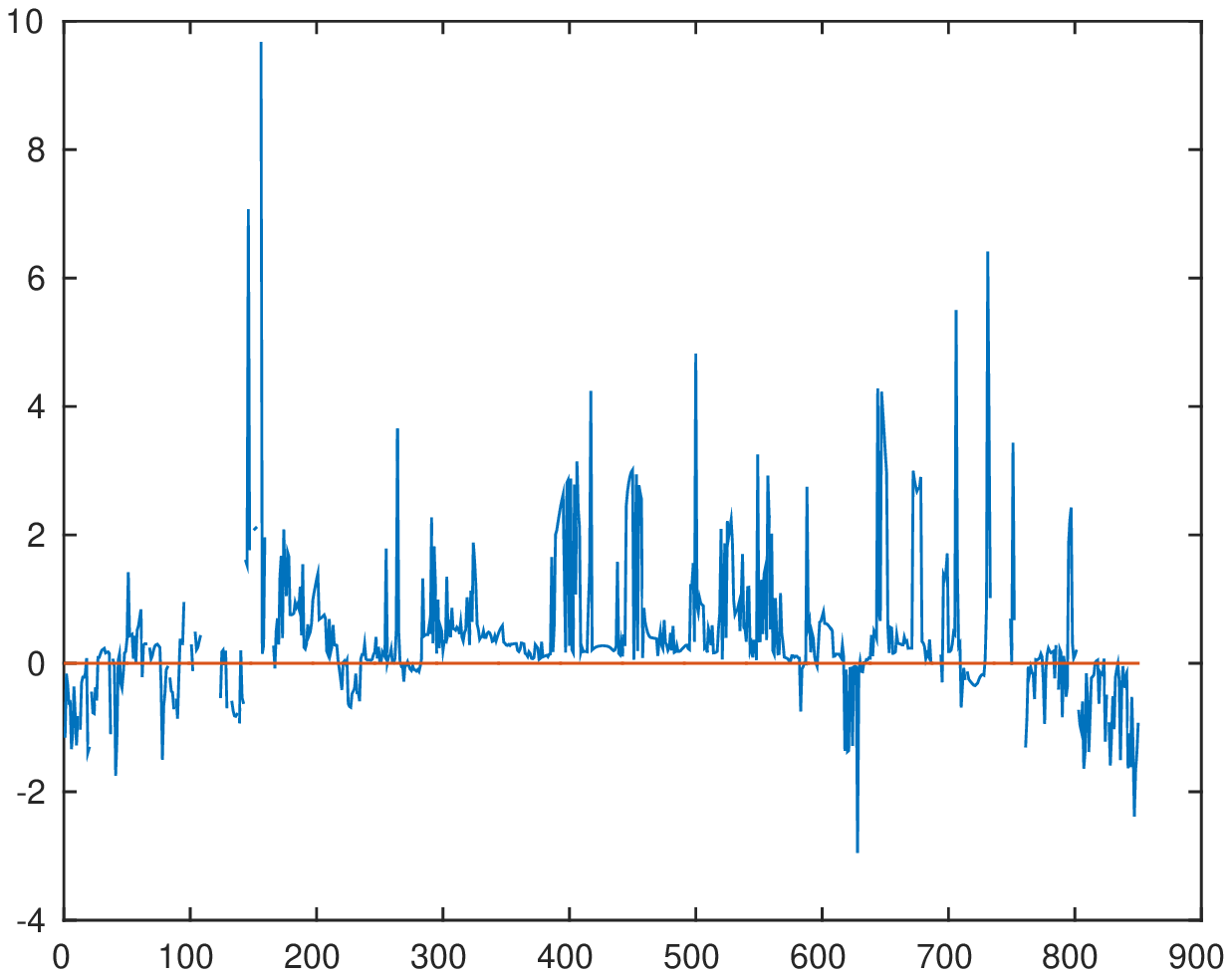}\includegraphics{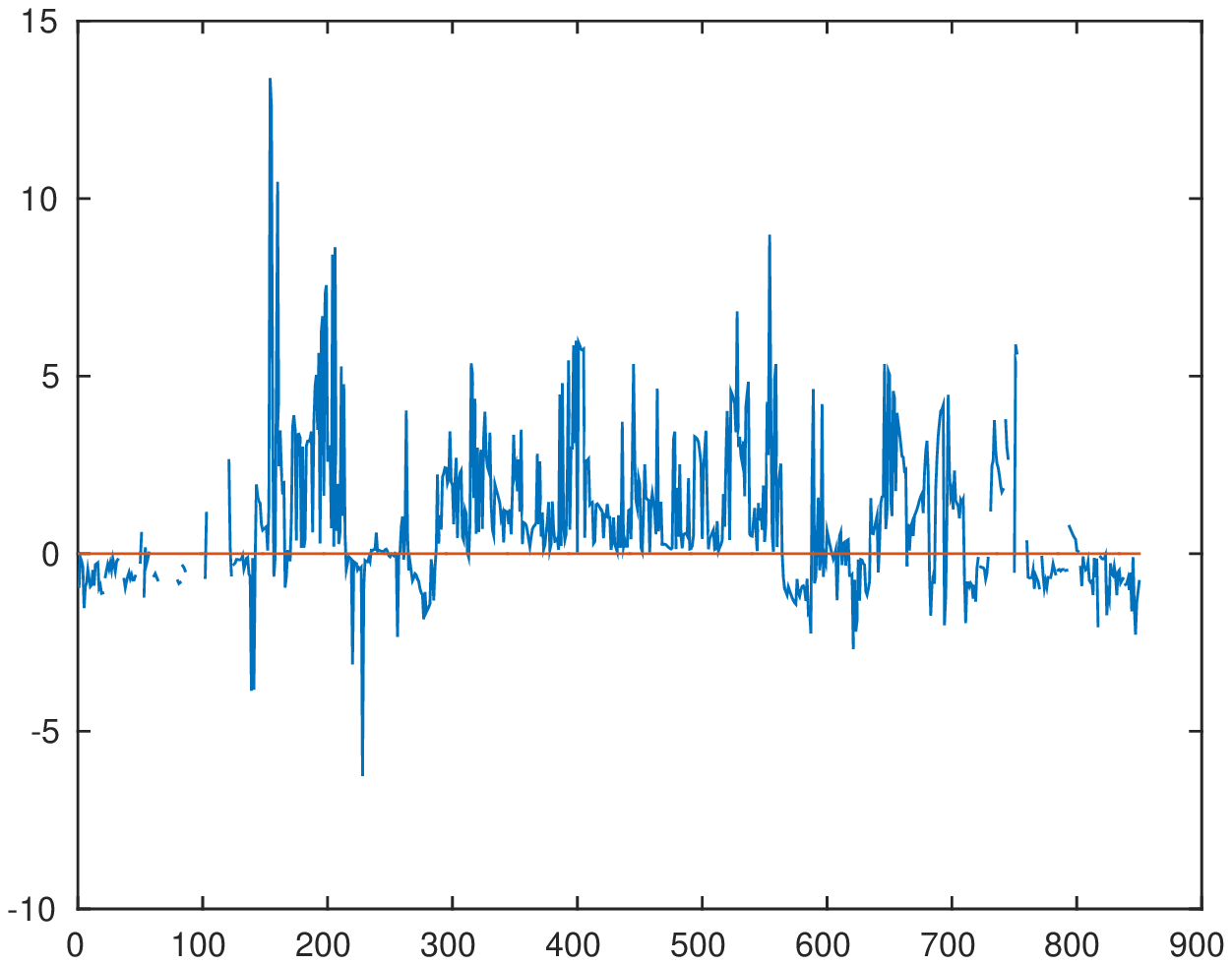}}
\caption{Example \ref{ex2}: Difference $\min\Q_+-\min\hat{\Q}_+$, (n=15) $\c=\a+20\eta$ 
(left) and $\c=\a+10\eta$; $b+\vert\a\vert$ on the $x$-axis }
\label{fig6}
\end{center}
\end{figure}
\begin{example}
\label{ex3}
We next consider the same knapsack problems (\ref{knapsack}) as in Example \ref{ex2} but now with quadratic criterion
$\c^T\x+\x^T\F\x$, again with a cost criterion of the form $c(i)=a(i)+s\,\eta$, $i=1,\ldots,n$, where $\eta$ is a random variable
uniformly distributed in $[0,1]$. The real symmetric matrix $\F$ is also randomly generated
and is not positive definite in general. Again in Figure \ref{fig8} one observes that the lower bound $\min\Q_+$
is almost always better than the optimal value $\min\hat{\Q}_+$ of the first level of the Lasserre-SOS hierarchy applied to the original formulation (\ref{knapsack}) of the problem.

\begin{figure}[htbp]
\begin{center}
\resizebox{0.5\textwidth}{!}
{\includegraphics{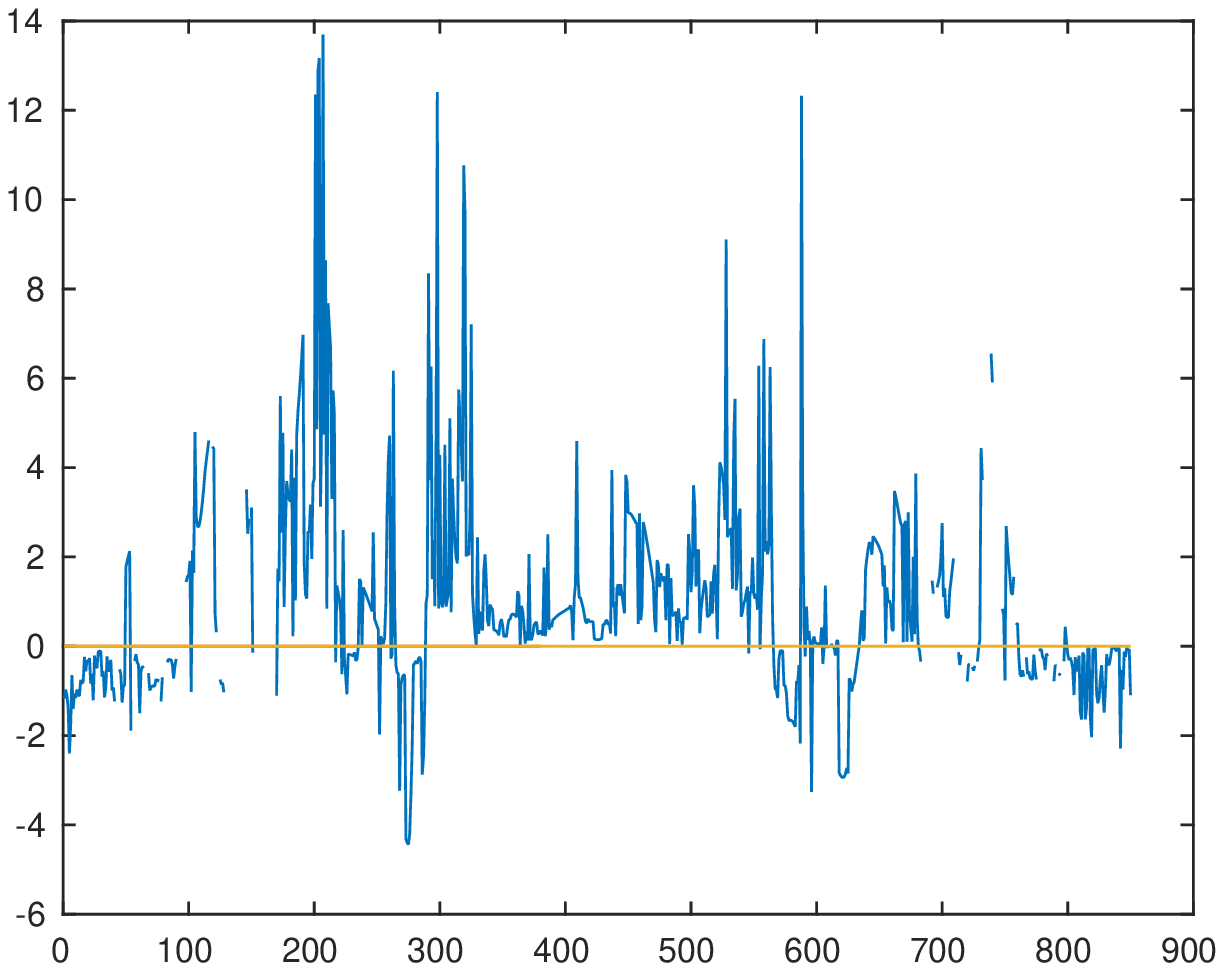}}
\caption{Example \ref{ex3}: Difference $\min\Q_+-\min\hat{\Q}_+$, (n=15) 
$\c=\a+10\eta$; $b+\vert\a\vert$ on the $x$-axis }
\label{fig8}
\end{center}
\end{figure}
\end{example}

\subsection{Larger size problems: The $k$-cluster problem}
As generating data $(\c,\a,b)$ for difficult $0/1$-knapsack problems of larger size
is not obvious, we next consider the so-called {\it $k$-cluster} problem in $n=70$ and $n=80$ variables:
\begin{equation}
\label{original}
 \min\,\{\,\x^T\A\x:\:\e^T\,\x\,=\,k;\:\x\in\{0,1\}^n\,\},\end{equation}
where $k\in\N$ is a fixed parameter, $\e=(1,1,\ldots,1)\in\R^n$, $\A\in\R^{n\times n}$ 
with $\A=\A^T$ and $\A_{ii}=0$ for all $i=1,\ldots,n$. Indeed 
the constraint $\e^T\x=k$  of the  $k$-cluster problem
is structural and does not depend on a vector $\a$ as in knapsack examples.
After the change of variables
$\x\to (\x+\e)/2$ the $k$-cluster problem reads:
\begin{equation}
\label{k-cluster-example}
\P:\quad\min\,\{\,2\e^T\A\x+\x^T\A\x:\:\e^T\x\,=\,2k-n;\:\x\in\{-1,1\}^n\,\}.
\end{equation}
We have compared the optimal value $f^*_{maxcut}$ for the Shor relaxation of the MAX-CUT formulation
(\ref{quad-pb}) of (\ref{k-cluster-example}) with the optimal value $f^*_{Shor}$ for  
the first semidefinite relaxation of 
(\ref{k-cluster-example}) in the Lasserre-SOS hierarchy. The latter
is a rather basic quadratic relaxation of the initial problem. In all our experiments
with $n=70$ and $n=80$, 
both values are almost identical since the relative difference is 
no more than $0.014\%$ and $0.06\%$ respectively.

Of course when $k>n$ the $k$-cluster problem (\ref{k-cluster-example})
has no feasible solution to $\P$. In view of the special structure of the $k$-cluster problem
it is straightforward to check that the first semidefinite relaxation of 
(\ref{k-cluster-example}) (or equivalently of (\ref{original})) in the Lasserre-SOS hierarchy is infeasible, hence certifying 
that $f^*=+\infty$. On the other hand the Shor relaxation 
of the MAX-CUT formulation of (\ref{k-cluster-example})
has always a finite value. However, one may prove that if $k>n$ then the optimal value is larger than $\rho(\c,\F)$, which by Proposition \ref{prop-main} also certifies that $f^*=+\infty$. That is, for the $k$-cluster problem (\ref{k-cluster-example}) the Shor relaxation of its MAX-CUT formulation (\ref{quad-pb})
also detects infeasibility.

\subsection*{Comparing with a quadratic convex relaxation}

Decompose $\A$ as $\U^T\Lambda\U$ with $\U^T\U=\mathbf{I}$ (the identity matrix) and $\Lambda$ is the diagonal matrix of 
eigenvalues of
 $\A$. Let $\theta:=\vert\lambda_{min}(\A)\vert+1$ where $\lambda_{min}(\A)$ is the smallest eigenvalue of $\A$.
 Replace 
$\A$ with $\tilde{\A}:=\U^T(\Lambda+\theta\mathbf{I})\U$
so that the resulting the quadratic form $\x\mapsto \x^T\tilde{\A}\x$ is convex. Then observe that
\[\min\,\{\,2\e^T\A\x+\x^T\tilde{\A}\x:\:\e^T\x\,=\,2k-n;\:\x\in\{-1,1\}^n\,\}\,=\,f^*+n\theta\]
because $\x^T\tilde{\A}\x=\x^T\A\x+\theta\Vert\x\Vert^2$ and $x_i^2=1$ for all $i=1,\ldots,n$.
Therefore one may compare the optimal value
$f^*_{maxcut}$ of the Shor relaxation of (\ref{k-cluster-example}) with $\tilde{\psi}=\psi-n\theta$, where 
\[\psi=\min\,\{\,2\e^T\A\x+\x^T\tilde{\A}\x:\:\e^T\x\,=\,2k-n;\: x_i^2\leq 1,\:i=1,\ldots,n\,\},\]
which is a convex quadratic relaxation. Results displayed in Table \ref{table-kcluster70-qconvex} 
show that the Shor relaxation is always better, and sometimes significantly better. However this convex relaxation 
is rather weak.

Next we do the same comparison for a $k$-cluster problem
where some sparsity is introduced in the cost matrix $\A$.
Namely once $\A$ has been generated then for each
non-diagonal entry $\A_{ij}$ we decide
$\A_{ij}=\A_{ji}:=0$ with probability $0.4$, $0.7$ and $0.8$.
Results displayed in Table \ref{table-kcluster100-qconvex-rand}  for $n=100$ 
show that again the Shor relaxation is always better 
(and some times significantly better) than the quadratic convex relaxation.
The sparser is $\A$ the more efficient is the Shor relaxation compared with the quadratic convex relaxation.

\begin{table}[!h]
\begin{center}
\begin{tabular}{||c|c|c|c|c|c|c||}
\hline
n=70	 & k=50 &k=45& k=40 &k=35 &k=30 & k=25\\
\hline
$f^*_{maxcut}$ &   2324.68&   1350.74&   488.83&  -261.78&  -900.55  & -1429.66\\
$\tilde{\psi}$ &   2260.00&   1278.08&   410.48&  -343.95&  -985.57&  -1514.77\\
$\frac{100(f^*_{maxcut}-\tilde{\psi})}{\tilde{\psi}}$&   2.86\%&   5.68\%&  19.08\%&   23.89\%&   8.62\% &5.61\%\\
\hline
\end{tabular}
\end{center}
\caption{$k$-cluster: Shor relaxation ($f^*_{maxcut}$) vs convex quadratic relaxation ($\tilde{\psi}=\psi-n\theta$) with $n=70$ 
\label{table-kcluster70-qconvex}}
\end{table}

\begin{table}[!h]
\begin{center}
\begin{tabular}{||c|c|c|c|c|c||}
\hline
n=100&	k=70 & k=60 &k=50& k=40& k=20 \\
\hline
&\multicolumn{3}{|c||}{$\A_{ij}=0$ with probability 0.8}\\
\hline
$f^*_{maxcut}$ & 513.32& -60.83&  -450.91&  -781.84 & -1084.98\\
$\tilde{\psi}$ & 419.07&  -147.77& -572.27&  -894.72&  -1188.59\\
$\frac{100(f^*_{maxcut}-\tilde{\psi})}{\tilde{\psi}}$
& 22.4\%&  58.8\%& 21.2\%& 12.6\%&8.7\%\\
\hline
\hline
&\multicolumn{3}{|c||}{$\A_{ij}=0$ with probability 0.6}\\
\hline
$f^*_{maxcut}$ &  1278.01& 256.43 & -627.72& -1248.39& -1988.94\\
 $\tilde{\psi}$ &  1192.99& 155.44&  -723.34&  -1372.94& -2095.66\\
$\frac{100(f^*_{maxcut}-\tilde{\psi})}{\tilde{\psi}}$ & 7.1\%&  64.9\%& 13.2\%&   9.0\%&   5.0\%\\
\hline
\hline
 \end{tabular}
\end{center}
\caption{$k$-cluster: Shor relaxation ($f^*_{maxcut}$) vs convex quadratic relaxation ($\tilde{\psi}=\psi-n\theta$) with $n=100$. 
$\A_{ij}=0$ with probability $p=0.8,0.6$.
\label{table-kcluster100-qconvex-rand}}
\end{table}

\newpage

\subsection{Comparing with the copositive formulation}
 
 As already  mentioned in the introduction, the 0/1 program (\ref{def-pb}) also has a copositive formulation. Namely, 
 let $e_i=(\delta_{i=j})\in\R^n$, $i=1,\ldots,n$, and  $\e=(1,\ldots,1)\in\R^{n}$. Following
 Burer \cite[p. 481--482]{burer}, introduce $n$ additional variables $\z=(z_1,\ldots,z_n)$ and the $n$ additional 
 equality constraints  $x_i+z_i=1$, $i=1,\ldots,n$, with $\z\geq0$ (which are necessary to obtain an equivalent formulation). So let $\tilde{\x}^T=(\x^T,\z^T)\in\R^{2n}$ and with $\Id\in\R^{n\times n}$ 
 being the identity matrix,
 introduce the real matrices
 \[\tilde{\F}:=\left[\begin{array}{cc}\F &0\\
 0&0\end{array}\right],\quad \S:=\left[\begin{array}{cc}\A &0\\
 \Id&\Id\end{array}\right]\]
 and the real vectors $\tilde{\c}^T:=(\c^T,0)\in\R^{2n}$ and $\tilde{\b}^T=(\b^T,\e^T)\in\R^{m+n}$. Let  $\S_i$ denote the $i$-th row vector of $\S$, $i=1,\ldots,2n$.
 Then the copositive formulation of (\ref{def-pb}) reads:
 \begin{equation}
 \label{copo}
 \begin{array}{rl}f^*=\min &\tilde{\c}^T\tilde{\x}+\langle \tilde{\F},\X\rangle\\
 \mbox{s.t.}&\S_i\,\tilde{\x}\,=\,\tilde{\b}_i;\quad
 \S_i\,\X\,\S_i^T=\tilde{\b}_i^2,\quad i=1,\ldots,m+n\\
 &\X_{ii}=\tilde{\x}_i,\quad i=1,\ldots,2n;\:\left[\begin{array}{cc}1&\tilde{\x}^T\\ \tilde{\x}&\X\end{array}\right]\,\in\,\mathcal{C}^*_{2n+1},
  \end{array}\end{equation}
 where $\mathcal{C}_{2n+1}$ is the convex cone of $(2n+1)\times (2n+1)$
 copositive matrices and $\mathcal{C}^*_{2n+1}$
 is its dual, i.e., the convex cone of {\it completely positive matrices}.

The hard constraint being membership in $\mathcal{C}^*_{2n+1}$, a strategy is to use
hierarchies of tractable approximations (of increasing size) of $\mathcal{C}^*_{2n+1}$, as described in e.g. D\"ur \cite{durr}.
In particular a possible choice for the first relaxation in such hierarchies is to replace (\ref{copo}) with the semidefinite program:
\begin{equation}
 \label{copo-relax}
 \begin{array}{rl}f^*_{copo}=\min &\tilde{\c}^T\tilde{\x}+\langle \tilde{\F},\X\rangle\\
 \mbox{s.t.}&\S_i\,\tilde{\x}\,=\,\tilde{\b}_i;\quad  
 \S_i\,\X\,\S_i^T=\tilde{\b}_i^2,\quad i=1,\ldots,m+n\\
 &\X_{ii}=\tilde{\x}_i,\quad i=1,\ldots,2n;\:
 \left[\begin{array}{cc}1&\tilde{\x}^T\\ \tilde{\x}&\X\end{array}\right]\,\in\,\mathcal{S}^+_{2n+1}\cap\mathcal{N}_{2n+1},
 \end{array}\end{equation}
where $\mathcal{S}^+_{2n+1}$ (resp. $\mathcal{N}_{2n+1}$) is the convex cone
of real symmetric positive semidefinite (resp. entrywise nonnegative) matrices. Then
(\ref{copo-relax}) is a semidefinite relaxation of (\ref{copo}) because
$\mathcal{C}^*_{2n+1}\subset\mathcal{S}^+_{2n+1}\cap\mathcal{N}_{2n+1}$, and so $f^*_{copo}\leq f^*$.
\begin{example}
\label{ex-copo-final}  
 We have considered the $k$-cluster problem
 (\ref{original}) with $n=50$ variables and where $\F$ is randomly generated with $80\%$ of zero entries in $\F$.
Then we compared the optimal values $f^*_{maxcut}$ and $f^*_{copo}$ of the respective Shor-relaxations of the MAX-CUT formulation (\ref{quad-pb}) and
the copositive formulation of (\ref{original}).
 For most values of $k$ the lower bound $f^*_{maxcut}$ was almost identical to $f^*_{copo}$
  (the relative difference $\Delta:=100\,(f^*_{maxcut}-f^*_{copo})/f^*_{copo}$ being at most $0.1\%$), and was better 
 ($\Delta=68\%$ and $44\%$) for small $k=12$ and $k=14$ respectively. However notice that the size of the semidefinite constraint 
in the latter is twice as large as the one in the former.
\end{example}
\begin{remark}
Notice that in all examples the bounds $f^*_{maxcut}$ and $f^*_{Shor}$  
(respectively denoted $\min\Q_+$ and $\min\hat{\Q}_+$ in Section \ref{0-1-knapsack}) are almost identical. It might be tempting to conjecture that they
are indeed identical and the error is coming from numerical roundoffs. However results displayed in Figure \ref{fig8}
show that the difference $\min\Q_+-\min\hat{\Q}_+$  is not uniformly distributed.
\end{remark}
 \section{Conclusion}
 
 In this paper we have shown that a linear or quadratic 0/1 program $\P$ has an equivalent MAX-CUT 
 formulation and so the whole arsenal of approximation techniques for the latter can be applied.
 In particular, and as suggested by some preliminary tests on a  (limited sample) of 0/1 knapsack and 
 $k$-cluster examples,
 it is expected that the lower bound obtained from the Shor relaxation of this MAX-CUT formulation
 will be in general better than the one obtained from the standard LP-relaxation (for linear 0/1 programs)
 of the original problem. The situation might be even better for quadratic 0/1 programs when
the Shor relaxation of the MAX-CUT formulation is compared with a convex quadratic relaxation.
 
 \subsection*{Acknowledgements}
 {\small This research was supported by a grant from the MONGE program of the
{\it F\'ederation Math\'ematique Jacques Hadamard} (FMJH, Paris) and an ERC Advanced Grant
(ERC-ADG TAMING 666981) of the European Research Council (ERC). The author also thanks anonymous referees for helpful suggestions 
to improve the initial version of the paper.}

\end{document}